\documentclass{article}
\usepackage{amsrefs,amssymb,amsmath,amsthm,amsfonts,epsfig,graphicx}
\usepackage[utf8]{inputenc}
\usepackage[T1]{fontenc}
\usepackage[english]{babel}
\usepackage{hyperref}
\hypersetup{
    colorlinks,
    citecolor=black,
    filecolor=black,
    linkcolor=black,
    urlcolor=black
}

\theoremstyle{plain}
\newtheorem{teo}{Theorem}[section]

\newtheorem{lem}[teo]{Lemma}
\newtheorem{prop}[teo]{Proposition}		
\newtheorem{rmk}[teo]{Remark}

\theoremstyle{definition}

\DeclareMathOperator{\diam}{diam}

\DeclareMathOperator{\dist}{dist}

\newcommand{\expc}{\eta}

\newcommand{\R}    {\mathbb R}

\newcommand{\Z}  {\mathbb Z}

\renewcommand{\epsilon}{\varepsilon}

\author{Alfonso Artigue}

\title{Anomalous cw-expansive surface homeomorphisms}

\begin{document}
\date{\today}
\maketitle

\begin{abstract}
We prove that the genus two surface admits a cw-expansive homeomorphism 
with a fixed point whose local stable set is not locally connected.
\end{abstract}

\section{Introduction}

In \cites{L,Hi} Lewowicz and Hiraide proved that every expansive homeomorphism of a compact surface $S$ is 
conjugate with a pseudo-Anosov diffeomorphism. 
Recall that a homeomorphism $f\colon S\to S$ is \emph{expansive} if there is 
$\expc>0$ such that if $\dist(f^n(x),f^n(y))\leq\expc$ for all $n\in\Z$ then 
$x=y$. In \cite{Ka93} Kato introduced a generalization of expansivity called \emph{continuum-wise expansivity}. 
We say that $f$ is \emph{cw-expansive} if there is $\expc>0$ such that if $C\subset S$ is a continuum (compact connected) 
and $\diam(f^n(C))\leq\expc$ for all $n\in\Z$ then $C$ is a singleton. 
In the works of Kato on cw-expansivity we find several generalizations of results holding for expansive homeomorphisms.
Also, he found new phenomena as for example a cw-expansive homeomorphism with infinite topological entropy.
In this paper we investigate the possibility of extending results from \cites{L,Hi} for a cw-expansive surface 
homeomorphism.

A key concept in dynamical systems is that of the stable set of a point. 
Given a homeomorphism $f\colon S\to S$ and $\epsilon>0$ we define the $\epsilon$-\emph{stable set} of a point $x\in S$ 
as
\[
  W^s_\epsilon(x)=\{y\in S:\dist(f^n(x),f^n(y))\leq\epsilon \hbox{ for all }n\geq 0\}.
\]
For a hyperbolic set it is well known that
local stable sets are embedded submanifolds (the invariant manifold theorem). 
In the papers \cites{L,Hi} they prove that if $f\colon S\to S$ is expansive then 
the connected component of $x$ in $W^s_\epsilon(x)$ is a locally connected set. 
This implies the arc-connection of this components and allows them to prove that 
each local stable set is a finite union of arcs. 
In some sense it is an invariant manifold theorem for expansive homeomorphisms of surfaces.
After this, they prove the conjugacy with a pseudo-Anosov diffeomorphism, 
giving a complete classification of such dynamics. 

Some cw-expansive homeomorphisms of surfaces are not expansive, see \cites{ArNexp, APV, PPV, PaVi}.
In these examples the components of local stable sets are locally connected. 
The purpose of this paper is to construct a cw-expansive homeomorphism of a compact surface 
with a point whose local stable set is connected but it is not locally connected.

\section{The example}
\label{secAno}
The example is a variation of those in \cites{ArNexp, APV}. 
We start defining a homeomorphism of $\R^2$ such $(0,0)$ as a fixed point and 
its stable set is not locally connected. 
Then, this anomalous saddle is \emph{inserted} in a derived from Anosov diffeomorphism of the torus. 
Finally, this anomalous derived from anosov system is connected via a wandering tube with a usual derived from Anosov to obtain our example.

\subsection{An anomalous saddle point}
\label{secIrregular}
First, we will construct a plane homeomorphism $f$ with a fixed point at the origin 
whose local stable set is connected but 
not locally connected.
The homeomorphism will be defined as the composition of a piece-wise linear transformation $T$ and a time-one 
map of a flow $\phi$. This flow will have a non-locally connected set $E$ of fixed points.

We start with the linear part of the construction.
Let $T_i\colon \R^2\to\R^2$, for $i=1,2,3$, be the linear transformations defined by 
 $T_1(x,y)=(\frac x2,\frac y2)$, $T_2(x,y)=(\frac x2,2y)$, $T_3(1,1)=(\frac12,\frac12)$, $T_3(0,1)=(0,2)$.
Define the piece-wise linear transformation $T\colon \R^2\to\R^2$ as 
\[
 T(x,y)=\left\{ 
 \begin{array}{l}
    T_1(x,y)\hbox{ if } x\geq y\geq 0,\\
    T_2(x,y)\hbox{ if } x\leq 0 \hbox{ or } y\leq 0,\\
    T_3(x,y)\hbox{ if } y\geq x\geq 0.
 \end{array}
 \right.
\]
In Figure \ref{sillaLoca} we illustrate the definition of $T$.
\begin{figure}[ht]
\center{\includegraphics[scale=.7]{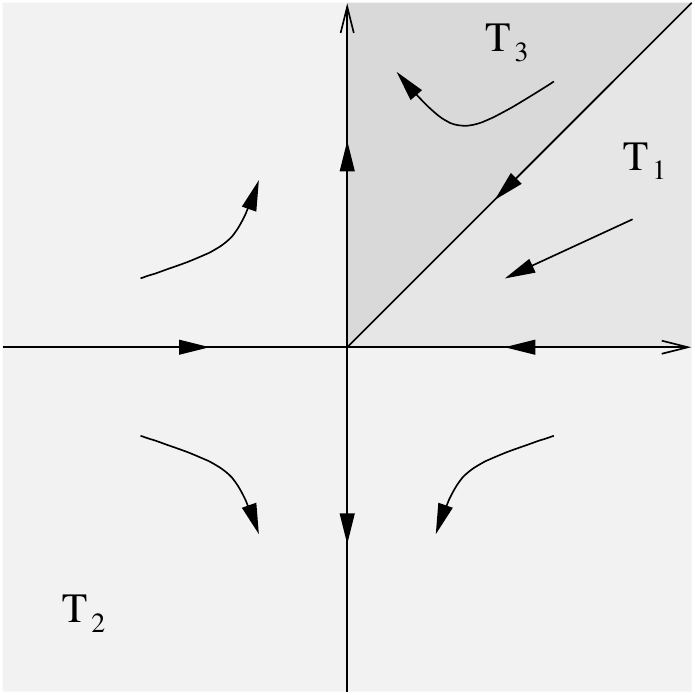}}
\caption{The action of the piece-wise linear transformation $T$.}
\label{sillaLoca}
\end{figure}

Now we define the non-locally connected plane continuum $E$.
Some care is needed in order to be able of relate this set with the transformation $T$.
Define the sets:
\[
\begin{array}{l}
C(a)=\{(a,y)\in\R^2:0\leq y\leq a\}\hbox{ for }a>0, \\
D_1=\cup_{i=1}^\infty C(\frac12+\frac1{2^i}), \\
D_{n+1}=T_1(D_n)\hbox{ for all }n\geq 1, \\
D=\cup_{n\geq 1} D_n.
\end{array}
\]
Also consider the non-locally connected continuum $E=D\cup([0,1]\times\{0\})$ shown in Figure \ref{conjuntoE}. 
\begin{figure}[ht]
\center{\includegraphics[scale=.8]{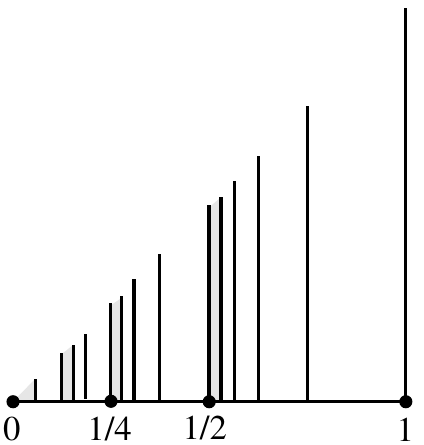}}
\caption{The non-locally connected set $E$.}
\label{conjuntoE}
\end{figure}

Now we will define a flow related with the set $E$.
Consider the continuous function $\rho\colon \R^2\to\R$ defined by 
$$\rho(p)=\dist(p,E)=\min\{\dist(p,q):q\in E\}$$
and the vertical vector field $X\colon\R^2\to\R^2$ defined as 
$$X(p)=(0,\rho(p)).$$ 
Since 
\[
 |\dist(p,E)-\dist(q,E)|\leq\dist(p,q)
\]
for all $p,q\in\R^2$, we have that $\rho$ is Lipschitz.
Therefore, by Picard's theorem, $X$ has unique solutions and we can  
consider the flow $\phi\colon\R\times\R^2\to\R^2$ induced by $X$. 
Since $\|X(p)\|\leq \|p\|$ for all $p\in \R^2$ we have that every solution is defined for all $t\in\R$.

Let $f\colon\R^2\to\R^2$ be the homeomorphism 
$$f=\phi_1\circ T,$$ 
where $\phi_1\colon\R^2\to\R^2$ is the time-one homeomorphism associated to the vector field $X$.

\begin{prop}
 The homeomorphism $f$ preserves the vertical foliation on $\R^2$.
\end{prop}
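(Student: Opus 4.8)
The plan is to verify that both building blocks of $f = \phi_1 \circ T$ send vertical lines to vertical lines. A map "preserves the vertical foliation" means it carries each leaf $\{x\}\times\R$ onto some leaf $\{x'\}\times\R$; equivalently, the first coordinate of the image depends only on the first coordinate of the input. I would check this separately for $T$ and for $\phi_1$, and then compose.

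**First** I would treat the flow $\phi$. The vector field is $X(p) = (0,\rho(p))$, which has zero first component everywhere, so every integral curve has constant first coordinate: if $\gamma(t) = \phi_t(p)$ then $\frac{d}{dt}(\text{first coord of }\gamma) = 0$. Hence $\phi_t$, and in particular $\phi_1$, fixes the first coordinate of every point and therefore maps each vertical line $\{x\}\times\R$ into itself. (Surjectivity onto the whole line isn't needed for "preserves the foliation" in the leaf-to-leaf sense, but it does hold since the flow is complete and the line is flow-invariant.) So $\phi_1$ preserves the vertical foliation, mapping each leaf to itself.

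**Next** I would handle $T$. Here I go region by region using the explicit formulas. On $\{x\ge y\ge 0\}$ we have $T_1(x,y) = (x/2, y/2)$, whose first coordinate $x/2$ depends only on $x$. On $\{x\le 0\text{ or }y\le 0\}$ we have $T_2(x,y) = (x/2, 2y)$, again with first coordinate $x/2$. On $\{y\ge x\ge 0\}$, $T_3$ is the linear map with $T_3(1,1) = (1/2,1/2)$ and $T_3(0,1) = (0,2)$; writing $(x,y) = x(1,1) + (y-x)(0,1)$ gives $T_3(x,y) = x(1/2,1/2) + (y-x)(0,2) = (x/2,\, 2y - \tfrac32 x)$, so the first coordinate is again $x/2$. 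Thus on all three pieces the first coordinate of $T(x,y)$ equals $x/2$, so $T$ sends the vertical line $\{x\}\times\R$ into (in fact onto) the vertical line $\{x/2\}\times\R$; one should note the three formulas agree on the overlaps (the rays $x=y\ge 0$ and $x=0\le y$) so $T$ is well defined, and in any case the first-coordinate conclusion is unambiguous. Therefore $T$ preserves the vertical foliation.

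**Finally**, since both $T$ and $\phi_1$ map vertical leaves to vertical leaves, so does their composition $f = \phi_1\circ T$: explicitly, the first coordinate of $f(x,y)$ equals $x/2$. I expect no real obstacle here — the only point requiring a moment's care is extracting the formula for $T_3$ from the two specified values and checking the piecewise definition is consistent on the boundary rays; everything else is immediate from the observation that $X$ has vanishing horizontal component.
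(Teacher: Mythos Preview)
Your argument is correct and follows exactly the same approach as the paper: the paper's proof is the single sentence ``It follows because $\phi_t$ and $T$ preserves the vertical foliation,'' and you have simply supplied the details behind that sentence. Your explicit computation of $T_3(x,y)=(x/2,\,2y-\tfrac32 x)$ and the boundary-consistency checks are accurate and confirm that the first coordinate of $T(x,y)$ is $x/2$ in every region.
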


\begin{proof}
 It follows because $\phi_t$ and $T$ preserves the vertical foliation.
\end{proof}

Consider the region 
\begin{equation}
\label{eqR1}
R_1=\{(x,y)\in[0,1]\times[0,1]:x\geq y\}.  
\end{equation}

\begin{lem}
\label{lemaR1}
 For all $p\in R_1$ it holds that $\rho(T(p))=\frac 12 \rho(p)$ and 
 $$\phi_t(T(p))=T(\phi_t(p))$$ 
 if $\phi_t(p)\in R_1$ and $t\geq 0$.
\end{lem}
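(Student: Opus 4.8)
The plan is to exploit that $T$ coincides on $R_1$ with the homothety $T_1$ of ratio $\tfrac12$ centered at the origin (indeed $R_1\subseteq\{(x,y):x\ge y\ge 0\}$, and $T_1(R_1)\subseteq R_1$), together with the self-similarity $T_1(E)\subseteq E$.

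First I would prove $\rho(T(p))=\tfrac12\rho(p)$ for $p\in R_1$. The inequality $\le$ is general: since $T_1(D_n)=D_{n+1}$ we have $T_1(D)\subseteq D$, and $T_1([0,1]\times\{0\})=[0,\tfrac12]\times\{0\}\subseteq[0,1]\times\{0\}$, so $T_1(E)\subseteq E$; as $T_1$ scales distances by $\tfrac12$, $\rho(T_1p)=\dist(T_1p,E)\le\dist(T_1p,T_1(E))=\tfrac12\dist(p,E)=\tfrac12\rho(p)$. For the reverse inequality, write $2E$ for the image of $E$ under the homothety of ratio $2$; then $\rho(T_1p)=\dist(p/2,E)=\tfrac12\dist(p,2E)$, so it suffices to check $\dist(p,2E)=\dist(p,E)$ for $p\in R_1$. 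One computes $2E=2D_1\cup D\cup([0,2]\times\{0\})$, and the pieces of $2E$ not already contained in $E$ are the vertical segments $2\,C(\tfrac12+\tfrac1{2^i})=C(1+\tfrac1{2^{i-1}})$, all of abscissa $>1$, and the segment $[1,2]\times\{0\}$. For $p=(x_0,y_0)\in R_1$ we have $0\le y_0\le x_0\le 1$; hence $(x_0,0)\in[0,1]\times\{0\}\subseteq E$ is at least as close to $p$ as any point of $[1,2]\times\{0\}$, and $(1,y_0)\in C(1)\subseteq E$ is at least as close to $p$ as any point of a vertical segment of abscissa $\ge 1$. Therefore every point of $2E$ is matched by a point of $E$ no farther from $p$, so $\dist(p,2E)\ge\dist(p,E)$, while the opposite inequality is clear because $E\subseteq 2E$. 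This yields the equality and hence the claim.

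Next I would deduce the commutation with the flow. Fix $p\in R_1$ and $t\ge0$ with $\phi_t(p)\in R_1$. Since the vector field $X=(0,\rho)$ has zero first component and $\rho\ge0$, writing $\phi_s(p)=(x_0,y(s))$ the function $y$ is nondecreasing with $y(0)=y_0\le x_0$ and $y(t)\le x_0$; hence $0\le y(s)\le x_0$ and $\phi_s(p)\in R_1$ for all $s\in[0,t]$. Define $\gamma(s)=T(\phi_s(p))=T_1(\phi_s(p))$, a curve contained in $T_1(R_1)\subseteq R_1$. Differentiating and using the previous part applied to $\phi_s(p)\in R_1$, $\dot\gamma(s)=\bigl(0,\tfrac12\rho(\phi_s(p))\bigr)=\bigl(0,\rho(T_1\phi_s(p))\bigr)=X(\gamma(s))$, with $\gamma(0)=T(p)$. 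By uniqueness of solutions (Picard's theorem, $\rho$ Lipschitz) we get $\gamma(s)=\phi_s(T(p))$ for $s\in[0,t]$; evaluating at $s=t$ gives $\phi_t(T(p))=T(\phi_t(p))$.

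The main obstacle, I expect, is the reverse inequality in the first step: one must verify that the extra vertical segments of $2E$ (those of abscissa $>1$, coming from $2D_1$) and the extra horizontal segment $[1,2]\times\{0\}$ are screened, as seen from any point of $R_1$, by $C(1)$ and $[0,1]\times\{0\}$, which already belong to $E$. This is precisely where the hypothesis $p\in R_1$ enters; the remaining ingredients are the scaling property of $T_1$ and the uniqueness theorem for the flow $\phi$.
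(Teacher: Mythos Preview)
Your proof is correct and follows essentially the same approach as the paper: identify $T$ with the homothety $T_1$ on $R_1$, use the self-similarity of $E$ to obtain $\rho(T(p))=\tfrac12\rho(p)$, and then conclude the flow commutation via the uniqueness theorem for ODEs. You are in fact more careful than the paper about the reverse inequality $\rho(T(p))\ge\tfrac12\rho(p)$, which the paper asserts by writing ``$\rho(T(p))=\dist(T(p),T(q))$'' without spelling out why the extra pieces of $2E$ are screened by $C(1)$ and $[0,1]\times\{0\}$; your argument makes this explicit.
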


\begin{proof}
 By the definition of $T$ we have that $T(p)=T_1(p)=\frac12p$ for all $p\in R_1$. 
 Given $p\in R_1$ consider $q\in E$ such that $\rho(p)=\dist(p,q)$. 
 Then $\rho(T(p))=\dist(T(p),T(q))$ and $\rho(T(p))=\frac 12 \rho(p)$.
  
 Consider $t\geq 0$ such that $\phi_t(p)\in R_1$. Since $X$ is a vertical vector field we have that $\phi_{[0,t]}(p)\subset R_1$. 
 For $s\in(0,t)$, if $q=\phi_s(p)$ then 
 \[
  X(T(q))=(0,\rho(T(q)))=\left(0,\frac12\rho(q)\right)=d_qT(X(q)).
 \]
 Therefore, $\phi_s(T(p))=T(\phi_s(p))$ for $s\in(0,t)$ and consequently for $s=t$.
\end{proof}

Define the stable set of the origin as usual by $$W^s_f(0)=\{p\in\R^2:\lim_{n\to+\infty}\|f^n(p)\|=0\}.$$

\begin{prop}
 For the homeomorphism $f\colon\R^2\to\R^2$ defined above it holds that $$W^s_f(0)\cap([0,1]\times[0,1])=E.$$
\end{prop}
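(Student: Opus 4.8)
The plan is to prove the two inclusions $E \subseteq W^s_f(0)$ and $W^s_f(0) \cap ([0,1]\times[0,1]) \subseteq E$ separately, exploiting throughout that $f$ preserves the vertical foliation and that on the region $R_1$ the map $f$ is essentially a contraction conjugate (along the flow direction) to the linear map $T_1$. The key observation is that $E$ is invariant under $T_1$: indeed $T_1(E) = T_1(D \cup ([0,1]\times\{0\})) = (\cup_{n\geq 2} D_n) \cup ([0,1/2]\times\{0\})$, and since $D_2 = T_1(D_1) \subseteq D$ while the segment $[0,1/2]\times\{0\}$ is part of $[0,1]\times\{0\}$, we get $T_1(E) \subseteq E$, in fact $T_1(E) \subseteq E$ with $T_1(E)$ the "inner half." I would also record that every point of $E$ either lies on the horizontal axis $[0,1]\times\{0\}$ or lies on one of the vertical segments $C(a)$ (inside $D_1$) or an iterate $T_1^n(C(a))$; in all cases the point is at vertical distance $0$ from a point of $E$ directly below it on the same vertical line, so $\rho$ vanishes on $E$.

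For the inclusion $E \subseteq W^s_f(0)$: since $\rho \equiv 0$ on $E$, the vector field $X$ vanishes on $E$, so $\phi_t$ fixes every point of $E$; hence $f|_E = \phi_1 \circ T|_E = T|_E$. Now $E \subseteq R_1 \cup (\text{axis region})$; more carefully, every point of $E$ lies in the region $\{x \geq y \geq 0\}$ or on the axis (where $x \geq 0$, $y = 0$), and on all of $E$ the map $T$ acts as $T_1(x,y) = (x/2, y/2)$ — one must check that even the points with $y \geq x$ (the segments $C(1/2 + 1/2^i)$ for $i=1$ have their top endpoint at $(1, 1)$, wait, at $(3/2 \cdot ?, \dots)$) actually: $C(a)$ with $a = 1/2 + 1/2^i \in (1/2, 1]$ consists of points $(a, y)$ with $0 \leq y \leq a$, so these satisfy $x = a \geq y$, hence lie in $R_1$-type region where $T = T_1$. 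Good — so $f^n|_E = T_1^n|_E$, and $\|T_1^n(p)\| = \|p\|/2^n \to 0$. Thus $E \subseteq W^s_f(0)$, and since $E \subseteq [0,1]\times[0,1]$ this gives one inclusion.

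For the reverse inclusion $W^s_f(0) \cap ([0,1]\times[0,1]) \subseteq E$: take $p \in [0,1]\times[0,1]$ with $f^n(p) \to 0$. First I would argue that the forward orbit must eventually be governed by the $T_1$-contraction, which forces $p$ to lie in the vertical line $\{x = x_0\}$ over a point $x_0 \in [0,1]$ such that the orbit stays in the relevant region; the horizontal coordinate under $f$ is exactly the horizontal coordinate under $T$ (since $\phi_t$ is vertical), and for the orbit to converge to $0$ we need the $x$-coordinate to converge to $0$, which — given the three linear pieces of $T$ — forces $x_0 \in [0,1]$ with all iterates landing in the contracting sectors, ultimately in $R_1$. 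Then, with the orbit eventually in $R_1$, Lemma \ref{lemaR1} lets me conjugate: $\phi_t$ and $T$ commute there, $\rho(T^n(p)) = \rho(p)/2^n$, and the vertical displacement accumulated by the flow is $\sum_n \int_0^1 \rho(\phi_s(T^n(p)))\,ds$. For $f^n(p) \to 0$ the $y$-coordinate must go to $0$; but the flow $\phi$ only pushes points \emph{upward} (since $\rho \geq 0$), so the only way the $y$-coordinate can avoid being pushed away is if $\rho$ vanishes along the whole orbit — i.e. every $f^n(p)$ lies in $E$. Pulling back, $p$ itself must lie in $E$: if $\dist(p, E) > 0$ then $\rho(p) > 0$, the flow strictly increases $y$, and tracking this forward (using $\rho(T^n(p)) = \rho(p)/2^n$ and that the geometry of $E$ near the axis is self-similar) shows the orbit cannot converge to the origin within $[0,1]^2$, or escapes the region, a contradiction.

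The main obstacle I anticipate is the last step: showing rigorously that a point $p \notin E$ (but in $[0,1]^2$, over the axis) has $f^n(p) \not\to 0$. The delicate case is when $p$ sits in a vertical "gap" of $D$ — between two segments $C(1/2 + 1/2^i)$ and $C(1/2 + 1/2^{i+1})$, or in the limit gap accumulating at $\{x = 1/2\}$. There $\rho(p) > 0$ is small, the upward push per unit time is small, but it is relentless and the horizontal contraction by $T_1$ does not help the vertical coordinate decay — one must show the cumulative upward drift $\sum_n (\text{flow time } 1)\cdot(\text{average of }\rho\text{ along } T^n\text{-orbit})$ either keeps $y$ bounded away from $0$ or pushes the orbit out of the sector into the $T_2$-region (where $y \mapsto 2y$ expands), in either case preventing convergence to the origin. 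Handling the self-similar structure of $D$ near $x = 1/2$ carefully — so that a point not on any $C(a)$ really does stay away from $E$ along its orbit — is where the real work lies; everything else is bookkeeping with the explicit formulas for $T_1, T_2, T_3$ and Lemma \ref{lemaR1}.
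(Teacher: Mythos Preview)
Your overall architecture---prove $E\subset W^s_f(0)$ directly, then show any $p\in[0,1]^2\setminus E$ fails to converge to $0$---is exactly the paper's, and your argument for the first inclusion is essentially identical to the paper's (indeed $E\subset R_1$, $\rho|_E=0$, so $f|_E=T_1|_E$).

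The difference is in the reverse inclusion, and it is precisely the step you flag as ``the main obstacle.'' You propose to track the $f$-orbit step by step and control the cumulative upward drift $\sum_n\int_0^1\rho(\phi_s(T^n p))\,ds$, competing against the contraction of $T_1$; you rightly note this is delicate near the self-similar gaps of $D$. The paper avoids this entirely by using Lemma~\ref{lemaR1} more aggressively: if $f^n(p)\in R_1$ for all $n\ge 0$, then iterating the commutation $\phi_t\circ T=T\circ\phi_t$ collapses the whole orbit to
\[
f^n(p)=(\phi_1\circ T)^n(p)=T^n(\phi_n(p)).
\]
Now there is no sum to estimate: the question is simply whether the \emph{single} flow line $t\mapsto\phi_t(p)$ stays in the bounded triangle $R_1$ for all $t\ge 0$. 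Since $p\notin E$ gives $\rho(p)>0$, and the upward velocity $\rho(\phi_t(p))$ does not decrease (moving up only increases the distance to $E$ on that vertical line), the flow exits $R_1$ in finite time---contradiction. So what you anticipated as the hard analytic core becomes a two-line observation once the commutation is iterated rather than applied once per step.

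One small correction: for $p\in[0,1]^2\setminus R_1$ (i.e.\ $y>x\ge 0$) the orbit does not ``ultimately land in $R_1$''; it is governed by $T_3$ (or $T_2$ on the $y$-axis), under which the ratio $y/x$ strictly increases and the $y$-coordinate eventually doubles, so $f^n(p)\to\infty$ outright. The paper dismisses this case in one line, and you should too, rather than trying to feed it back into the $R_1$ analysis.
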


\begin{proof} 
First notice that $E\subset W^s_f(0)$ because for all $p\in E$ and $t\in\R$ we have that 
$\phi_t(p)=p$ and $T(p)=\frac12p$. Then $f(p)=\frac12p$ for all $p\in E$. 

Now take a point $p\in [0,1]\times[0,1]$. 
For $p\notin R_1$, the set defined in (\ref{eqR1}), it is easy to see that $f^n(p)\to\infty$ as $n\to+\infty$.
Assume that $p\in R_1\setminus E$. 
We will show that $p\notin W^s_f(0)$. 
It is sufficient to show that for some $n>0$ the point $f^n(p)$ is not in $R_1$.
By contradiction, assume that $f^n(p)\in R_1$ for all $n\geq 0$. 
Then, by Lemma \ref{lemaR1} we know that 
$$f^n(p)=(\phi_1\circ T)^n(p)=T^n(\phi_n(p)).$$
Notice that $\phi_1^n=\phi_n$. 
Then, it only rests to prove that $\phi_n(p)\notin R_1$ for some $n>0$. 
But this is easy because the velocity of $\phi_t(p)$ is $\rho(\phi_t(p))$ and 
this velocity increases with $t$. 
\end{proof}

\subsection{A variation of a derived from Anosov}
\label{AnomCwexp}

We start recalling some properties of what is known as 
a derived from Anosov diffeomorphisms. 
The interested reader should consult \cite{Robinson}*{Section 8.8} for a construction of such a map 
and detailed proofs of its properties. 
A derived from Anosov is a $C^\infty$ diffeomorphism $f\colon T^2\to T^2$ 
of the two-dimensional torus such that: it satisfies Smale's axiom A and its non-wandering set consists 
of an expanding attractor and a repeller fixed point $p\in T^2$. 
The expanding attractor is locally a Cantor set times an arc, and it has two hyperbolic 
fixed points of saddle type $q$ and $q'$ as in Figure \ref{figDA}.

\begin{figure}[ht]
\begin{center}  
  \includegraphics[scale=.7]{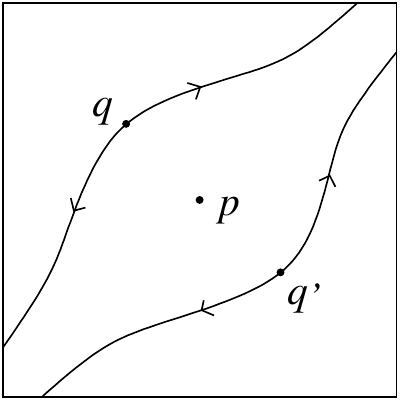}
  \caption{The derived from Anosov diffeomorphism on the two-dimensional torus.}
  \label{figDA}
\end{center}
\end{figure}

We will assume that there is a local chart $\varphi\colon D\to T^2$, 
defined on the disc $D=\{x\in\R^2:\|x\|\leq 2\}$,
such that
\begin{enumerate}
\item $\varphi(0)=p$,
\item the pull-back of the stable foliation by $\varphi$
is the vertical foliation on $D$ and
\item $\varphi^{-1}\circ f\circ \varphi(x)=4x$
for all $x\in D$ with $\|x\|\leq 1/2$.
\end{enumerate}

Now we will \emph{insert} the anomalous saddle in the derived from Anosov.
Let $q$ be the hyperbolic fixed point shown in Figure \ref{figDA}. 
Consider a topological rectangle $R_q$ covering a half-neighborhood of $q$ as in Figure \ref{figDARect}.
\begin{figure}[ht]
\begin{center}  
  \includegraphics{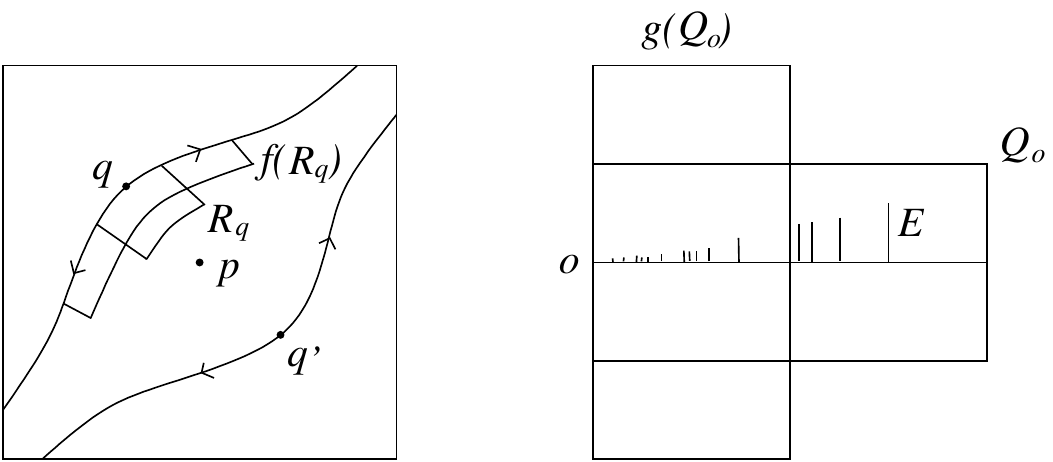}
  \caption{Topological rectangles on the derived from Anosov (left) and on the anomalous saddle (right).}
  \label{figDARect}
\end{center}
\end{figure}
Consider the homeomorphism with an anomalous saddle fixed point defined in Section \ref{secIrregular}. 
Call this homeomorphism $g$ (to avoid confusion with the derived from Anosov $f$). 
Denote by $o$ its fixed point (the origin of $\R^2$) and 
take a rectangle $Q_o\subset \R^2$, similar to $R_p$, as in Figure \ref{figDARect}.
Now we can \emph{replace} $R_q$ with $Q_o$ and define what we call a 
\emph{derived from Anosov with an anomalous saddle} as in Figure \ref{figAnoDA}. 
\begin{figure}[ht]
\begin{center}  
  \includegraphics{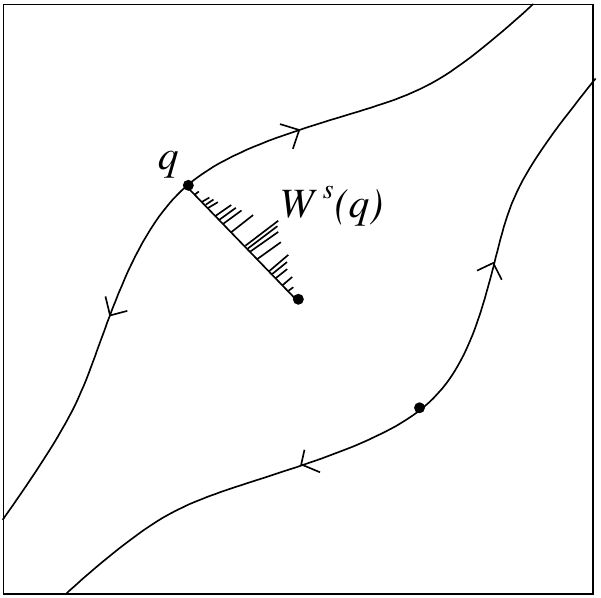}
  \caption{Derived from Anosov with an anomalous saddle fixed point $q$.}
  \label{figAnoDA}
\end{center}
\end{figure}

\subsection{Anomalous cw-expansive surface homeomorphism}

In this section we finish the construction with ideas from \cites{ArNexp,APV}.
Consider $S_1$ and $S_2$ two disjoint copies of the torus $\R^2/\Z^2$.
Let $f_i\colon S_i\to S_i$, $i=1,2$, be two homeomorphisms such that:
\begin{itemize}
\item $f_1$ is the derived from Anosov with an anomalous saddle from the previous section, denote by  
$p_1\in S_1$ the source fixed point of $f_1$,
\item $f_2$ is the inverse of the derived from Anosov (the usual one) with a sink fixed point at $p_2\in S_2$.
\end{itemize}
Consider local charts $\varphi_i\colon D_2\to S_i$, $i=1,2$, where $D_2$ is the compact 
disk
$$D_2=\{x\in\R^2:\|x\|\leq 2\},$$
such that:
\begin{enumerate}
 \item $\varphi_i(0)=p_i$,
\item the pull-back of the unstable foliation by $\varphi_2$
is the vertical foliation on $D_2$ and
\item $\varphi_1^{-1}\circ f^{-1}_1\circ \varphi_1(x)=\varphi_2^{-1}\circ f_2\circ \varphi_2(x)=x/4$
for all $x\in D$.
\end{enumerate}

Consider the open disk $$D_{1/2}=\{x\in\R^2:\|x\|<1/2\}$$ 
and the compact annulus $$A=D_2\setminus D_{1/2}.$$
Define $\psi\colon A\to A$ as the inversion $\psi(x)=x/\|x\|^2$.
The pull-back of the unstable foliation on $S_2$ by $\varphi_2\circ\psi$ on the annulus $A$ 
is shown in Figure \ref{figFols}.
\begin{figure}[ht]
\begin{center}  
  \includegraphics{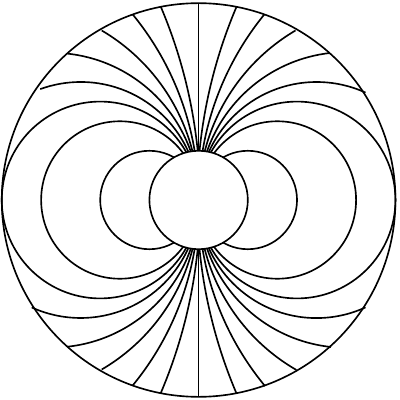}
  \caption{Unstable foliation of $f_2$ on the annulus, in the local chart $\varphi_2\circ\psi$.}
  \label{figFols}
\end{center}
\end{figure}
On the disjoint union $S_3=[S_1 \setminus \varphi_1(D_{1/2})]\cup [S_2\setminus \varphi_2(D_{1/2})]$
consider the equivalence relation generated by
\[
 \varphi_1(x)\sim \varphi_2\circ\psi (x)
\]
for all $x\in A$. Denote by $[x]$ the equivalence class of $x$.
The surface $S=S_3/\sim$ is the genus two surface if equipped with the quotient topology.
Consider the homeomorphism $f\colon S\to S$ defined by
\[
 f([x])=\left\{
\begin{array}{ll}
\left[f_1(x)\right] &\hbox{ if } x\in S_1 \setminus \varphi_1(D_{1/2})\\
\left[f_2(x)\right] &\hbox{ if } x\in S_2 \setminus \varphi_2(D_2)\\
\end{array}
\right.
\]

For $x\in S$ and $\expc>0$ define the set
\[
  \Gamma_\expc(x)=W^s_\expc(x)\cap W^u_\expc(x).
\]
\begin{rmk}
  In order to prove that a homeomorphism $f$ is cw-expansive it is equivalent to find $\expc>0$ such that 
  $\Gamma_\expc(x)$ is totally disconnected for all $x\in S$.
\end{rmk}

\begin{teo}
\label{mainteo}
There are cw-expansive homeomorphisms of the genus two surface having a 
fixed point whose local stable set is connected but it is not 
locally connected.
\end{teo}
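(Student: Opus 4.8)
The plan is to check, in order, that $f$ is a well-defined homeomorphism of the genus two surface $S$, that the image $[o]$ of the origin of the anomalous saddle is a fixed point whose local stable set is connected but not locally connected, and that $f$ is cw-expansive. For well-definedness there is essentially one thing to verify: that the two branches of the formula for $f$ agree on the glued annulus and descend to $S=S_3/\!\sim$. Since $f_1$ and $f_2$ fix $p_1$ and $p_2$ and are conjugate near these points, via $\varphi_1$ and $\varphi_2$, to $x\mapsto 4x$ and $x\mapsto x/4$, and since the inversion $\psi(x)=x/\|x\|^2$ is an involution with $\psi(4x)=\tfrac14\psi(x)$, the identification $\varphi_1(x)\sim\varphi_2\circ\psi(x)$ intertwines these local actions along the gluing locus; I would then transcribe the bookkeeping from \cites{ArNexp,APV} to conclude that $f$ is a homeomorphism of the genus two surface, that it carries $S_2\setminus\varphi_2(D_2)$ onto $S_1\setminus\varphi_1(D_{1/2})$, and that the glued annulus is a wandering region which one step of $f$ leaves for the bulk of $S_1$ and one step of $f^{-1}$ leaves for the bulk of $S_2$.

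The rectangle $Q_o$ used to insert $g$ lies away from the source $p_1$, from the removed disks and from the gluing annulus, so a neighborhood $U$ of $[o]$ in $S$ is identified with a neighborhood of $0$ in $\R^2$ on which $f=g=\phi_1\circ T$. By the Proposition of Section~\ref{secIrregular}, $W^s_g(o)\cap([0,1]\times[0,1])=E$, and moreover the argument there shows that points of $R_1\setminus E$ actually leave every small ball about $o$ under forward iteration; hence for $\epsilon$ small the component of $[o]$ in $W^s_\epsilon([o])$ agrees, near each point of $E$, with a homeomorphic copy of $E$. This component is connected and, since $E$ fails to be locally connected at the interior points of its limiting segments $C(\tfrac12)$, $C(\tfrac14)$, $\dots$ (onto which the teeth $C(\tfrac12+\tfrac1{2^i})$ and their $T_1$-contractions accumulate), it is not a locally connected set, which is the required property.

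For cw-expansivity I would use the Remark and produce $\expc>0$ with $\Gamma_\expc(x)$ totally disconnected for every $x\in S$. Outside a fixed neighborhood of $[o]$ the map $f$ agrees with the derived from Anosov constructions of \cites{ArNexp,APV}: on the bulk pieces of $S_1$ and $S_2$ the relevant attractors are locally a Cantor set times an arc, so local stable and unstable sets meet in totally disconnected sets; and a non-degenerate continuum entering the wandering annulus cannot have all its $f$-iterates of diameter $\le\expc$, because they are swept into those bulk pieces in one step on each side. The new point is to handle $U\cong(\R^2,0)$, where $f=\phi_1\circ T$ preserves the vertical foliation: here I would prove the dichotomy that a non-degenerate continuum $C\subset U$ all of whose iterates stay in $U$ with diameter $\le\expc$ must either acquire vertical extent at some iterate — in which case the expansion of the direction $(0,1)$ by $T$ on the sectors $\{x\le0\ \text{or}\ y\le0\}$ and $\{y\ge x\ge0\}$, together with the fact that the speed $\rho=\dist(\cdot,E)$ only grows along vertical $\phi$-trajectories off $E$, forces $\diam(f^nC)\to\infty$ — or else be contained in $W^s_{loc}([o])$, in which case the segments comprising $E$ and the $x$-axis are stretched by the factor $2$ of $T_1^{-1}$ under $f^{-1}$, compatibly with $\phi$ by Lemma~\ref{lemaR1}, so $\diam(f^{-n}C)\to\infty$. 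Choosing $\expc$ below the constants from the bulk DA pieces and below the sizes of the transition regions, and using that every orbit is eventually pushed by the dynamics into one of the controlled regions, then yields total disconnectedness of $\Gamma_\expc(x)$ for all $x$.

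The hard part will be this last dichotomy near $[o]$: because $W^s_{loc}([o])=E$ is not a locally connected set there is no local product structure to exploit, and $\phi$ is merely Lipschitz and genuinely nonlinear near $E$, so the alternative must be established by hand, tracking how the iterates of $C$ meet the three linear sectors of $T$ and interact with the speed function $\rho$; one must also verify that the constant obtained on $U$ is uniform in $x$ and matches the estimates governing orbits that cross between $U$, the bulk pieces and the tube.
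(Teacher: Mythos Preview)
Your plan diverges from the paper's proof in one essential respect: the paper does \emph{not} show that the homeomorphism $f$ assembled from $f_1$, $f_2$ and the tube is itself cw-expansive. Instead it perturbs $f$ on the wandering annulus $A_S$ to obtain a homeomorphism $g$ (with $g=f$ outside $A_S$) whose stable partition in $A_S$ contains no circle arc in local charts; since the unstable leaves in $A_S$ are circle arcs, this forces $\Gamma_\expc(x)$ to be totally disconnected for every $x\in A_S$, and because every wandering orbit visits $A_S$ this handles the entire wandering set. The non-wandering set is disposed of by the single observation that it is expansive and dynamically isolated. Because $g=f$ off $A_S$, the anomalous saddle and its non-locally connected local stable set survive untouched.

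This has two consequences for your proposal. First, the annulus step you pass over in one line (``swept into those bulk pieces in one step on each side'') is exactly where the paper locates the work: a continuum $C\subset A_S$ could a priori lie in a stable class of $f_1$ (so its forward iterates shrink toward the attractor) and simultaneously in an unstable leaf of $f_2$ (so its backward iterates shrink toward the repeller), and then all iterates stay small. The paper explicitly notes that, because of the anomalous saddle, the stable sets in $A_S$ do not form the naive foliation, so the transversality you would need is not automatic; the perturbation is introduced precisely to force it. The references \cites{ArNexp,APV} do not help you here, since in those papers cw-expansivity is likewise obtained only after perturbing in the tube. Second, what you identify as the ``hard part'' --- the sector-by-sector dichotomy for continua in a neighborhood of $[o]$, tracking $T$ and the Lipschitz flow $\phi$ near $E$ --- is avoided entirely in the paper by the expansivity of the non-wandering set; no local analysis at the anomalous saddle is needed. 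So your effort is concentrated in the wrong region: the neighborhood of $[o]$ requires nothing beyond the expansivity claim, while the annulus, which you treat as routine, is where a perturbation (or a genuine transversality argument you have not supplied) is required.
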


\begin{proof}
Define $A_S=[\phi_1(A)]$ the annulus on $S$ corresponding to $A$. 
We will perturb the homeomorphism $f$ defined above on the annulus $A_S$. 
First note that the non-wandering set of $f$ is expansive and dynamically isolated, 
i.e. there is a neighborhood $U$ of the non-wandering set $\Omega$ such that if $f^n(x)\in U$ for all 
$n\in\Z$ then $x\in\Omega$. 
Also note that for every wandering point $x\in S$ there is $n\in\Z$ such that 
$f^n(x)\in A_S$. 
Therefore, it is sufficient to prove that there is a homeomorphism $g\colon S\to S$ such that 
$f|A_S=g|A_S$ and 
there is $\delta>0$ such that for each $x\in A_S$ the intersection 
$\Gamma_\expc(x)$ is totally disconnected. 
In Figure \ref{figFols} we have the picture of the unstable foliation on $A_S$ (or in local charts). 
The problem is that the stable sets do not make a foliation, this is because there is an anomalous saddle. 
Then, it is convenient to consider the stable partition, i.e., the partition defined by 
the equivalence relation of being positively asymptotic. 
This partition is illustrated in Figure \ref{figStPart}.
\begin{figure}[ht]
\begin{center}  
  \includegraphics{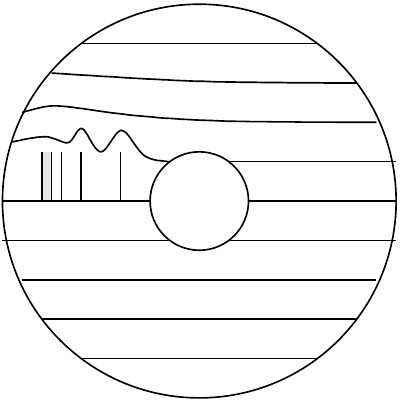}
  \caption{Stable partition on the annulus $A_S$.}
  \label{figStPart}
\end{center}
\end{figure}
We know that the unstable leaves are circle arcs, as in Figure \ref{figFols}. 
Therefore, it is sufficient to consider a $C^0$ perturbation $g$ of $f$ supported on $A_S$, 
such that the stable partition of $g$ in the annulus contains no circle arc, in local charts. 
See comments below.
By the previous comments this implies that $g$ is cw-expansive.
Since $g$ coincides with $f$ outside $A_S$, we have that $g$ has an anomalous saddle with 
non-locally connected stable set. This finishes the proof.
\end{proof}

The example has further properties that we wish to remark. 
Given $N\geq 1$ we say that $f$ is $N$-\emph{expansive} \cite{Mo12} if there is $\expc>0$ such that 
$|\Gamma_\expc(x)|\leq N$ for all $x\in S$, where $|A|$ stands for the cardinality of the set $A$.
We have that the example of the previous proof is not $N$-expansive for all $N\geq 1$ 
because there are point with $|\Gamma_\expc(x)|=\infty$ for arbitrarilly small values of $\expc$.

We say that a probability measure $\mu$ on $S$ is an \emph{expansive measure} \cite{MoSi}
if there is $\expc>0$ such that $\mu(\Gamma_\expc(x))=0$ for all $x\in S$. 
Obviously, if $\mu$ is an expansive measure then $\mu(x)=0$ for all $x\in S$, i.e $\mu$ is non-atomic.
In \cite{AD} it is shown that every non-atomic probability measure is expansive 
if and only if there is $\expc>0$ such that $|\Gamma_\expc(x)|\leq|\Z|$ for all $x\in S$.
This property is called \emph{countable-expansivity} and our example satisfies this condition. 

In the generalized pseudo-Anosov shown in \cites{PPV,PaVi} there is a finite number of \emph{spines} (or 1-\emph{prongs}), 
i.e. points whose local stable sets do not separate arbitrarilly small neighborhoods. 
This is a cw-expansive homeomorphism on the two-sphere. 
Our example has a countable set of spines, namely, 
the points in the set $E$ of Figure \ref{conjuntoE} in the line $y=x$ give rise to spines in the example.
As explained in \cite{PPV} the generalized pseudo-Anosov of the two-sphere has points with its local stable set 
non-locally connected. But the components are arcs. Our example has connected components not being locally connected. 
It seems to be the case that if we start with a set like the graph of $\sin(1/x)$, in place of the set $E$, 
we can obtain an anomalous saddle with no arc-connected stable set. Notice that the set $E$ is arc-connected.

Let us finally give some questions. 
May an example as in Theorem \ref{mainteo} be smooth? 
Can it be transitive, i.e. to have a dense orbit?

\begin{bibdiv}
\begin{biblist}

%

\bib{ArNexp}{article}{
 author={A. Artigue},
 title={Robustly N-expansive surface diffeomorphisms},
 journal={Disc. and Cont. Dyn. Sys.},
 year={to appear}}

\bib{APV}{article}{
author={A. Artigue},
author={M. J. Pacífico},
author={J. L. Vieitez},
title={N-expansive homeomorphisms on surfaces},
year={to appear},
journal={Communications in Contemporary Mathematics}}

\bib{AD}{article}{
author={A. Artigue},
author={D. Carrasco-Olivera},
title={A Note on Measure-Expansive Diffeomorphisms},
journal={J. Math. Anal. Appl.}, 
volume={428},
year={2015}, 
pages={713--716}}

\bib{Hi}{article}{
author={K. Hiraide},
title={Expansive homeomorphisms of compact surfaces are pseudo-Anosov},
journal={Osaka J. Math.},
volume={27},
pages={117--162},
year={1990}}

%


%

\bib{Ka93}{article}{
author={H. Kato},
title={Continuum-wise expansive homeomorphisms},
journal={Can. J. Math.},
volume={45},
number={3},
year={1993},
pages={576--598}}

%

%


\bib{L}{article}{
author={J. Lewowicz},
title={Expansive homeomorphisms of surfaces},
journal={Bol. Soc. Bras. Mat.},
volume={20},
pages={113-133},
year={1989}}


%
%
\bib{Mo12}{article}{
author={C. A. Morales},
title={A generalization of expansivity},
journal={Disc. and Cont. Dyn. Sys.},
volume={32},
year={2012}, 
pages={293--301}}

\bib{MoSi}{book}{
author={C. A. Morales},
author={V. F. Sirvent},
title={Expansive measures},
publisher={29 Colóquio Brasileiro de Matemática},
year={2013}}
%

\bib{PPV}{article}{
author={M. J. Pacifico}, 
author={E. R. Pujals}, 
author={J. L. Vieitez}, 
title={Robustly expansive homoclinic classes}, 
journal={Ergodic Theory Dynam. Systems},
volume={25},
year={2005}, 
pages={271--300}}

\bib{PaVi}{article}{
author={M. J. Pacifico},
author={J. L. Vieitez},
title={Entropy expansiveness and domination for surface diffeomorphisms},
journal={Rev. Mat. Complut.},
volume={21},
number={2},
year={2008},
pages={293--317}}

%
%


\bib{Robinson}{book}{
author={C. Robinson},
title={Dynamical Systems},
publisher={CRC Press},
year={1995}}

%
%

%
%


\end{biblist}
\end{bibdiv}
\noindent Departamento de Matemática y Estadística del Litoral, Salto-Uruguay\\
Universidad de la República\\
E-mail: artigue@unorte.edu.uy
\end{document}